\definecolor{webgreen}{rgb}{0,.5,0}
\definecolor{webbrown}{rgb}{.6,0,0}
\newcommand{\Ft}[1]{\mathcal{F}_{#1}}
\newcommand{\Lt}[1]{\mathcal{L}_{#1}}
\theoremstyle{plain}
\newtheorem{theorem}{Theorem}
\newtheorem{proposition}[theorem]{Proposition}
\theoremstyle{remark}
\begin{document}

\begin{center}
\vskip 1cm{\large\bf Identities for the generalized Fibonacci polynomial}
\vskip 1cm
\large
Rigoberto Fl\'orez\\
Department of Mathematics and Computer Science\\
The Citadel\\
Charleston, SC \\
U.S.A.\\
{\tt rigo.florez@citadel.edu} \\
\ \\
Nathan  McAnally\\
Department of Mathematics and Computer Science\\
The Citadel\\
Charleston, SC \\
U.S.A.\\
{\tt nmcanall@citadel.edu}{\tt nmcanall@citadel.edu} \\
\ \\
Antara Mukherjee\\
Department of Mathematics and Computer Science\\
The Citadel\\
Charleston, SC \\
U.S.A.\\
{\tt antara.mukherjee@citadel.edu}
\end{center}

\centerline{\bf Abstract}
\noindent
A second order polynomial sequence is of Fibonacci type (Lucas type) if its Binet formula is similar
in structure to the Binet formula for the Fibonacci (Lucas) numbers. In this paper we generalize identities
from Fibonacci numbers and Lucas numbers  to Fibonacci type and Lucas type polynomials. A Fibonacci
type polynomial is equivalent to a Lucas type polynomial if they both satisfy the same recurrence
relations. Most of the identities provide relationships between two equivalent polynomials.
In particular, each type of identities in this paper relate the following polynomial sequences:
Fibonacci with Lucas, Pell with Pell-Lucas, Fermat with Fermat-Lucas, both types of
Chebyshev polynomials, Jacobsthal with Jacobsthal-Lucas and both types of Morgan-Voyce.

This paper is now published, see INTEGERS 18B (2018)\\ \url{http://math.colgate.edu/~integers/s18b2/s18b2.pdf}

\section {Introduction}

A second order polynomial sequence is of \emph{Fibonacci type} (\emph{Lucas type}) if its Binet formula
has a structure similar to that for Fibonacci (Lucas) numbers. In the literature these type of sequences are known as
\emph{Generalized Fibonacci Polynomial} (GFP). They are actually a natural generalization of the sequence
$F_{n}(x)= x F_{n-1}(x) + 1 F_{n-2}(x)$ where $F_{0}(x)= 0$, $F_{1}(x)= 1$ --called the
\emph{Fibonacci polynomial sequence}--. However, there is no unique generalization of this
sequence, one can refer to articles by several authors like Andr\'{e}-Jeannin \cite{Richard1, Richard2},
Bergum et al. \cite{bergum:hoggatt}, and Fl\'orez et al. \cite{florezHiguitaMukCharact,florezHiguitaMuk}, to see this.
In this paper we use the definition of the GFP given in \cite{florezHiguitaMukCharact,florezHiguitaMuk,HoggattLong}.
Fl\'orez et al. \cite{florezHiguitaMukCharact}, proved the strong divisibility property for the GFP,
but while working on it the authors needed some identities involving the polynomials.
When searching through the existing literature they realized that there were a limited
number of those identities, even just for Fibonacci polynomials.

The study of identities for Fibonacci polynomials and Lucas polynomials
have received less  attention than their counterparts for numerical sequences, even if many of these identities
can be proved easily.  A natural question to ask is: under what conditions is it possible to extend identities that
already exist for Fibonacci and Lucas numbers to the GFP?  We observe here that the identities involving
Fibonacci and Lucas numbers extend naturally to the GFP that satisfy closed formulas
similar to the Binet formulas satisfied by Fibonacci and Lucas numbers.

While this paper does not intend to prove new numerical identities, we have concluded with a
substantial list of known numerical identities extended to the GFP. We also adapt some known identities given
for Fibonacci or Lucas polynomials to the GFP.

Bergum and Hoggatt \cite{bergum:hoggatt} gave a list of more than twenty identities for their definition of generalized
Fibonacci polynomials. Koshy \cite{koshypartII}  has also a big collection of identities for Fibonacci polynomials and Lucas polynomials.
We generalize or adapt many of the identities given by either Bergum et al.  \cite{bergum:hoggatt}, Wu et al.  \cite{wu}, or Koshy \cite{koshypartII}
to our definition of the GFP. Most of the numerical identities for Fibonacci and Lucas numbers that we extend to GFP can be found
in the books, articles or webpages on Fibonacci and Lucas numbers and their applications, \cite{hansen, koshy, melham, morgado, surrey, Vajda}.

We note that once we have an identity --from literature-- for numerical sequences it is not too complicated to extend this to GFPs.
However,  the numerical  identities do not  extend automatically to GFPs; they  need some adjustments (some of the identities in the paper
were balanced using \emph{Mathematica}$^{\text{\textregistered}}$).
Therefore, the aim of this paper is to give a collection of identities for the GFP. In particular, the identities that we give here
apply to the following familiar polynomial sequences: Fibonacci polynomials, Lucas polynomials, Pell polynomials, Pell-Lucas polynomials,
Fermat polynomials, Fermat-Lucas polynomials, Chebyshev first kind polynomials, Chebyshev second kind polynomials,
Jacobsthal polynomials, Jacobsthal-Lucas polynomials, and Morgan-Voyce  polynomials.

\section{Generalized Fibonacci polynomials}\label{GFP}

In this section we introduce the generalized Fibonacci polynomial sequences. This definition gives rise to some known
polynomial sequences which are mentioned below. The definition matches the definitions of polynomial sequences given
in other papers on this topic, for example the definition given by Fl\'{o}rez et al., Hoggatt et al., and
Koshy \cite{florezHiguitaMukCharact, HoggattLong, koshy}, respectively.

For the remaining part of this section we reproduce the definitions by Fl\'orez et al. \cite{florezHiguitaMukstar,florezHiguitaMukCharact,florezHiguitaMuk},
for generalized Fibonacci polynomials.  We now give the two second order polynomial recurrence relations in which we divide the GFP.

\begin{equation}\label{Fibonacci;general:FT}
\Ft{0}(x)=0, \; \Ft{1}(x)= 1,\;  \text{and} \;  \Ft{n}(x)= d(x) \Ft{n - 1}(x) + g(x) \Ft{n - 2}(x) \text{ for } n\ge 2
\end{equation}
where $d(x)$, and $g(x)$ are fixed non-zero polynomials in $\mathbb{Q}[x]$.

We say a polynomial recurrence relation is of \emph{Fibonacci type} if it satisfies the relation given in \eqref{Fibonacci;general:FT}, and of \emph{Lucas type} if:

\begin{equation}\label{Fibonacci;general:LT}
\Lt{0}(x)=p_{0}, \; \Lt{1}(x)= p_{1}(x),\;  \text{and} \;  \Lt{n}(x)= d(x) \Lt{n - 1}(x) + g(x) \Lt{n - 2}(x) \text{ for } n\ge 2
\end{equation}
where $|p_{0}|=1$ or $2$ and $p_{1}(x)$, $d(x)=\alpha p_{1}(x)$, and $g(x)$ are fixed non-zero  polynomials in $\mathbb{Q}[x]$ with
$\alpha$ an integer of the form $2/p_{0}$.

To use similar notation for \eqref{Fibonacci;general:FT} and \eqref{Fibonacci;general:LT} on certain occasions we write
$p_{0}=0$, $p_{1}(x)=1$ to indicate the initial conditions of Fibonacci type polynomials. Some known examples
of Fibonacci type polynomials and of Lucas type polynomials are in Table \ref{equivalent}
(see also \cite{florezHiguitaMukCharact,florezHiguitaMuk,Pell,Fermat,koshy}).

Suppose that $G_{n}$ is either $\Ft{n}(x)$ or $\Lt{n}(x)$ for all $n\ge 0$ and $d^2(x)+4g(x)> 0$,  then the explicit formula for the
recurrence relations in
 \eqref{Fibonacci;general:FT} and \eqref{Fibonacci;general:LT}  is given by
\begin{equation*}\label{solutionrecurrencerelationuno}
 G_{n}(x) = t_1 a^{n}(x) + t_2 b^{n}(x)
\end{equation*}
where $a(x)$ and $b(x)$ are the solutions of the quadratic equation associated with the second order
recurrence relation $G_{n}(x)$. That is,  $a(x)$ and $b(x)$ are the solutions of $z^2-d(x)z-g(x)=0$.
If $\alpha=2/p_{0}$, then the Binet formula for Fibonacci type polynomials is stated in  \eqref{bineformulauno} and the Binet formula
for Lucas type polynomials is stated in \eqref{bineformulados}
(for details on the construction of the two Binet formulas see \cite{florezHiguitaMukCharact}).

\begin{equation}\label{bineformulauno}
\Ft{n}(x) = \dfrac{a^{n}(x)-b^{n}(x)}{a(x)-b(x)}
\end{equation}
and
\begin{equation}\label{bineformulados}
\Lt{n}(x)=\dfrac{a^{n}(x)+b^{n}(x)}{\alpha}.
\end{equation}

Note that for both types of sequences: $$a(x)+b(x)=d(x), \quad a(x)b(x)= -g(x), \quad \text{ and } \quad a(x)-b(x)=\sqrt{d^2(x)+4g(x)}$$
where $d(x)$ and $g(x)$ are the polynomials defined in \eqref{Fibonacci;general:FT} and \eqref{Fibonacci;general:LT}.

A sequence of Lucas type (Fibonacci type) is \emph{equivalent} or \emph{conjugate} to a sequence of Fibonacci type (Lucas type),
if their recursive sequences are determined by the same polynomials $d(x)$ and $g(x)$. Notice that two equivalent polynomials
have the same $a(x)$ and $b(x)$ in their Binet representations. Examples of equivalent polynomials are given in Table \ref{equivalent}.
Note that the leftmost polynomials in Table \ref{equivalent} are of Lucas type and their equivalent Fibonacci type polynomials
are in the second column on the same line. Table \ref{familiarfibonacci} shows some familiar examples of those types of polynomial sequences.

\begin{table} [!ht]
\begin{center}\scalebox{0.8}{
\begin{tabular}{|l|l|l|l|} \hline
  Polynomial            & Initial value     & Initial value	& Recursive Formula 						       \\	
    			 &$G_0(x)=p_0(x)$  	&$G_1(x)=p_1(x)$	&$G_{n}(x)= d(x) G_{n - 1}(x) + g(x) G_{n - 2}(x)$ 	   \\  \hline   \hline
  Fibonacci             	 & $0$	    &$1$      	&$F_{n}(x) = x F_{n - 1}(x) + F_{n - 2}(x)$	 	       \\
  Lucas 	             	 &$2$	    & $x$ 	 	&$D_n(x)= x D_{n - 1}(x) + D_{n - 2}(x)$                \\ 						
  Pell			    		 &$0$	    & $1$       &$P_n(x)= 2x P_{n - 1}(x) + P_{n - 2}(x)$               \\
  Pell-Lucas 	    		 &$2$	    &$2x$       &$Q_n(x)= 2x Q_{n - 1}(x) + Q_{n - 2}(x)$               \\
  Pell-Lucas-prime 	    	 &$1$	    &$x$       	&$Q_n^{\prime}(x)= 2x Q_{n - 1}^{\prime}(x) + Q_{n - 2}^{\prime}(x)$               \\
  Fermat  	                 &$0$	    & $1$      	&$\Phi_n(x)= 3x\Phi_{n-1}(x)-2\Phi_{n-2}(x) $           \\
  Fermat-Lucas	             &$2$	    &$3x$  		&$\vartheta_n(x)=3x\vartheta_{n-1}(x)-2\vartheta_{n-2}(x)$\\
  Chebyshev second kind      &$0$	    &$1$       	&$U_n(x)= 2x U_{n-1}(x)-U_{n-2}(x)$  	 	              \\
  Chebyshev first kind       &$1$	    &$x$       	&$T_n(x)= 2x T_{n-1}(x)-T_{n-2}(x)$  \\	 	
  Jacobsthal  	  		     &$0$	    &$1$        &$J_n(x)= J_{n-1}(x)+2xJ_{n-2}(x)$ 	 	                  \\
  Jacobsthal-Lucas	         &$2$		&$1$        &$j_n(x)=	j_{n-1}(x)+2xj_{n-2}(x)$                       \\
  Morgan-Voyce	             &$0$		&$1$      	&$B_n(x)= (x+2) B_{n-1}(x)-B_{n-2}(x) $  	 	         \\
  Morgan-Voyce 	             &$2$		&$x+2$      &$C_n(x)= (x+2) C_{n-1}(x)-C_{n-2}(x)$  	 	         \\
  Vieta 		             &$0$ 	   	&$1$	    &$V_n(x)=x V_{n-1}(x)-V_{n-2}(x)$ 	    \\
  Vieta-Lucas 		         &$2$ 	   	&$x$	    &$v_n(x)=x v_{n-1}(x)-v_{n-2}(x)$      \\
  \hline
\end{tabular}}
\end{center}
\caption{Recurrence relation of some GFP.} \label{familiarfibonacci}
\end{table}

\begin{table} [!ht]
\begin{center}\scalebox{0.8}{
\begin{tabular}{|l|l|c|c|c|l|l|} \hline
  Polynomial  of	    & Polynomial of&$\alpha$ & $d(x)$& $g(x)$ &$a(x)$ 	              & $b(x)$\\	
 First type 	 	&  Second type  &	&	 & 	&      					  &	\\
$ \Lt{n}(x)$	 		&  $ \Ft{n}(x)$  	&	&	 & 	&      					  &	\\ \hline \hline
  $D_n(x)$     		&  $F_n(x)$ 	& $1$ & $x$   & $1$  & $(x+\sqrt{x^2+4})/2$    & $(x-\sqrt{x^2+4})/2$   \\ 						
  $Q_n^{\prime}(x)$	&  $P_n(x)$     & $1$ & $2x$  & $1$  & $x+\sqrt{x^2+1}$	       & $x-\sqrt{x^2+1}$        \\
  $\vartheta_n(x)$	&  $\Phi_n(x)$  & $1$ & $3x$  & $-2$ & $(3x+\sqrt{9x^2-8})/2$  & $(3x-\sqrt{9x^2-8})/ 2$ \\
  $T_n(x)$  		&  $U_n(x)$		& $2$ & $2x$  & $-1$ & $x +\sqrt{x^2-1}$       & $x -\sqrt{x^2-1}$       \\
  $j_n(x)$	   		&  $J_n(x)$     & $1$ & $1$   & $2x$ & $(1+\sqrt{1+8x})/2$     & $(1-\sqrt{1+8x})/2$      \\
  $C_n(x)$ 	   		&  $B_n(x)$ 	& $1$ & $x+2$ & $-1$ & $(x+2+\sqrt{x^2+4x})/2$ & $(x+2-\sqrt{x^2+4x})/2$  \\
  $v_n(x)$ 	   		&  $V_n(x)$     & $1$ &  $x$   & $-1$ & $(x+\sqrt{x^2-4})/2$    & $(x-\sqrt{x^2-4})/2$     \\ \hline
\end{tabular}}
\end{center}
\caption{Binet formulas for Lucas type $L_n(x)$ and its equivalent Fibonacci type $R_n(x)$.} \label{equivalent}
\end{table}

\textbf{Note.}  The definition of the Generalized Fibonacci Polynomial by Fl\'orez et al. \cite{florezHiguitaMuk},
differs from the definition in this paper due to the initial conditions of the Fibonacci type
polynomials. Thus, the initial conditions for the Fibonacci type polynomials given by Fl\'orez, Higuita, and Mukherjee
\cite{florezHiguitaMuk} are  $G_{0}(x)=p_{0}(x)=1$ and so implicitly $G_{-1}(x)=0$.
However, our definition for the Lucas type polynomial is identical to that found in the same article.

\section{Identities}

In this section we give a collection of identities for the GFP. These identities apply, in particular,
to our familiar set of GFP. Thus, the identities apply to: Fibonacci polynomials, Lucas polynomials,
Pell polynomials, Pell-Lucas polynomials,  Fermat polynomials, Fermat-Lucas polynomials,
Chebyshev first kind  polynomials, Chebyshev second kind polynomials,  Jacobsthal polynomials,
Jacobsthal-Lucas  polynomials and Morgan-Voyce polynomials. The identities below and their proofs
are expressed in terms of  $\alpha$, $d(x)$,  $g(x)$, $a(x)$, and  $b(x)$. Table \ref{equivalent} gives
those values for the  polynomials mentioned above.
If one is interested in identities for the special case of Fibonacci polynomials and Lucas polynomials, they may
see a larger collection in Koshy \cite{koshypartII}.

For example, suppose we  want to apply the identity in Proposition \ref{identities:from:Fibo}  part (1)
which is  $(a(x)-b(x))^2 \Ft{n}(x)  =\alpha( \Lt{n+1} (x)+g \Lt{n-1}(x))$, to Chebyshev polynomials
of the first and second kind, namely $T_n(x)$ and $U_n(x)$ respectively.  We begin by getting the
appropriated information for $U_n(x)$ and $T_n(x)$ given in Table \ref{equivalent} line 4. Thus,
$a(x) =x +\sqrt{x^2-1}$, $b(x)=x -\sqrt{x^2-1}$, $\alpha=2$, and $g(x)=-1$. Next, we observe that $U_n(x)$
is Fibonacci type, so $ \Ft{n}(x) $ equals $U_n(x)$ and from Table \ref{equivalent} line 4, we get that
its equivalent Lucas type polynomial $ \Lt{n} (x)$  equals $T_n(x)$. Note that $(a(x)-b(x))^2=4x^2-4$. Now substituting
all this information into the identity given in Proposition \ref{identities:from:Fibo}  part (1) we obtain,
\[(4x^2-4)U_n(x)=2\left(T_{n+1}(x)+(-1)T_{n-1}(x) \right), \text{ for all } n.\]
If we want to apply the same identity for Jacobsthal $J_n(x)$, then we substitute the information
given in  Table \ref{equivalent} line 5 into Proposition \ref{identities:from:Fibo} part (1).
So, we have
\[(1+8x)J_n(x)=j_{n+1}(x)+(2x)j_{n-1}(x), \text{ for all } n.\]

The identities
(\ref{I1})--(\ref{I3}), (\ref{I5})--(\ref{I15}), (\ref{I42}), (\ref{I49})--(\ref{I51}),
(\ref{I56}), (\ref{I57}), (\ref{I63}), (\ref{I65}), (\ref{I67})--(\ref{I69})
are generalizations of numerical identities by Vajda \cite{Vajda}. The identities
(\ref{I16})--(\ref{I39}), (\ref{I54}), (\ref{I55}), (\ref{I64}), (\ref{I82})--(\ref{I94})
are generalizations of numerical identities provided by Koshy \cite{koshy}.  The identities
(\ref{I4}), (\ref{I40}, (\ref{I41}), (\ref{I44})--(\ref{I46}), (\ref{I52}), (\ref{I53}),
(\ref{I58})--(\ref{I62}), (\ref{I66}), (\ref{I70})--(\ref{I81})
are generalizations of numerical identities found in the webpage \cite{surrey}. The identities
(\ref{I43}), (\ref{I47}), (\ref{I48})
are generalizations of numerical identities by Benjamin and Quinn \cite{benjamin}. The identities
(\ref{I95})--(\ref{I100}) are generalizations of numerical identities present in \cite{KoshyGao}.

\textbf{Note}: For the sake of simplicity throughout the rest of this paper, we use $\Ft{0}$, $\Lt{0}$, $d$, $g$, $a$, and $b$
instead of $\Ft{0}(x)$, $\Lt{0}(x)$, $d(x)$,  $g(x)$, $a(x)$, and  $b(x)$, respectively.

The expression $(a-b)^2$ appears very often in the following list of identities. It is easy to see that
$$(a-b)^2= \alpha  \Lt{2} +2g,\quad (a-b)^2=   \Ft{3} +3g \quad \text{and} \quad (a-b)^2=d^2+4g.$$
However, in the following list of identities we keep the factor $(a-b)^2$ as is.

Proposition \ref{Cassini_Catalan} parts \eqref{Cassini} and \eqref{Catalan} are generalizations of
Cassini and Catalan identities respectively.  One can refer to the book by Koshy \cite{koshy} to learn more about them.
We have found that all of the identities in this paper can be proved by substituting the Binet formula on
each side of the proposed identity. Since the reader can check the proofs
without major difficulty, we prove some identities and leave the others as exercises.

\begin{proposition}[\cite{florezHiguitaMukCharact}] \label{divisity:Hogat:property1}
If $\{  \Lt{n} \}$ and $\{  \Ft{n} \}$ are equivalent generalized Fibonacci polynomial sequences, then

\begin{enumerate}[(1)]
  \item $ \Ft{m+n+1} =  \Ft{m+1}  \Ft{n+1} +g \Ft{m}  \Ft{n} $
  \item if $n\ge m$, then $ \Ft{n+m} = \alpha  \Ft{n}   \Lt{m} -(-g)^{m}  \Ft{n-m} $
  \item if $n\ge m$, then $ \Ft{n+m} = \alpha  \Ft{m}  \Lt{n}  +(-g)^{m}  \Ft{n-m} $
  \item $(a-b)^2 \Ft{m+n+1}  = \alpha^2  \Lt{m+1}  \Lt{n+1} +\alpha^2 g \Lt{m}  \Lt{n} .$
\end{enumerate}
\end{proposition}

\begin{proposition}\label{Cassini_Catalan}If $\{  \Lt{n} \}$ and $\{  \Ft{n} \}$ are equivalent generalized
Fibonacci polynomial sequences, then
\begin{enumerate}[(1)]
 \item  \label{Cassini}   $ \Ft{n+1}  \Ft{n-1} -\Ft{n}^{2}=(-1)^{n}g^{n-1}$
\item \label{Catalan}  if  $n\ge m$, then
 $ \Ft{n}^{2}-(-g)^{n-m}\Ft{m}^{2}= \Ft{n+m}  \Ft{n-m} .$
 \end{enumerate}
\end{proposition}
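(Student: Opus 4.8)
The plan is to prove both parts directly from the Binet formula for Fibonacci type polynomials, equation~\eqref{bineformulauno}, namely $G_n^{\prime}(x)=R_n(x)=(a^n-b^n)/(a-b)$, together with the relation $ab=-g$. Throughout I write $\Delta = a-b$, so $G_n^{\prime}(x) = (a^n-b^n)/\Delta$, and I will repeatedly use $ab=-g$ in the form $(ab)^k=(-g)^k=(-1)^kg^k$.

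For part~\eqref{Cassini} (Cassini), I would substitute the Binet formula into the left-hand side:
\[
G_{n+1}^{\prime}G_{n-1}^{\prime}-(G_n^{\prime})^2
=\frac{(a^{n+1}-b^{n+1})(a^{n-1}-b^{n-1})-(a^n-b^n)^2}{\Delta^2}.
\]
Expanding the numerator, the pure powers $a^{2n}$ and $b^{2n}$ cancel, leaving $-a^{n+1}b^{n-1}-a^{n-1}b^{n+1}+2a^nb^n = -a^{n-1}b^{n-1}(a^2-2ab+b^2) = -(ab)^{n-1}(a-b)^2 = -(ab)^{n-1}\Delta^2$. Dividing by $\Delta^2$ gives $-(ab)^{n-1}=-(-g)^{n-1}=(-1)^n g^{n-1}$, which is exactly the claimed right-hand side. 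For part~\eqref{Catalan} (Catalan), I would similarly compute
\[
(G_n^{\prime})^2-(-g)^{n-m}(G_m^{\prime})^2
=\frac{(a^n-b^n)^2-(ab)^{n-m}(a^m-b^m)^2}{\Delta^2},
\]
using $(-g)^{n-m}=(ab)^{n-m}$. Expanding the numerator: the terms $a^{2n}+b^{2n}$ appear from the first square, and from the second term $(ab)^{n-m}(a^{2m}-2a^mb^m+b^{2m}) = a^nb^{n-m}\cdot\text{(stuff)}$ — more carefully, $(ab)^{n-m}a^{2m}=a^{n+m}b^{n-m}$, $(ab)^{n-m}b^{2m}=a^{n-m}b^{n+m}$, and $(ab)^{n-m}\cdot 2a^mb^m = 2a^nb^n$. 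Meanwhile the target right-hand side $G_{n+m}^{\prime}G_{n-m}^{\prime}$ expands as $(a^{n+m}-b^{n+m})(a^{n-m}-b^{n-m})/\Delta^2 = (a^{2n}-a^{n+m}b^{n-m}-a^{n-m}b^{n+m}+b^{2n})/\Delta^2$. Comparing, the numerator of the left side is $a^{2n}+b^{2n} - 2a^nb^n - a^{n+m}b^{n-m} - a^{n-m}b^{n+m} + 2a^nb^n = a^{2n}+b^{2n}-a^{n+m}b^{n-m}-a^{n-m}b^{n+m}$, which matches exactly. So both identities reduce to polynomial algebra in $a$ and $b$ after clearing $\Delta^2$.

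I expect no serious obstacle here; the only things to be careful about are (i) tracking the sign conventions when converting between $(-g)^k$, $(ab)^k$, and $(-1)^kg^k$, since the two parts use slightly different packaging of the same quantity, and (ii) noting that although $G_n^{\prime}$ is defined via the quotient Binet form, the computations are valid as identities in the field of fractions $\mathbb{Q}(x,\sqrt{d^2+4g})$, so dividing by $\Delta^2=d^2+4g\neq 0$ is legitimate. One could alternatively derive part~\eqref{Catalan} as a corollary of part~\eqref{Cassini} by induction on $m$, or deduce both from the multiplication-type formulas in Proposition~\ref{divisity:Hogat:property1}, but the direct Binet substitution is the cleanest and matches the stated philosophy of the paper that ``all of the identities in this paper can be proved by substituting the Binet formula on each side.'' I would also remark that part~\eqref{Catalan} contains part~\eqref{Cassini} essentially as the case $m=1$ (after a shift of index), which is worth a one-line observation but not a separate argument.
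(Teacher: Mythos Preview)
Your proposal is correct and follows essentially the same approach as the paper: both parts are proved by direct substitution of the Binet formula \eqref{bineformulauno}, using $ab=-g$, expanding, and simplifying the numerator over $(a-b)^2$. Your factoring of the Cassini numerator as $-(ab)^{n-1}(a-b)^2$ and your term-by-term match for Catalan are exactly the computations the paper carries out, only with a bit more detail displayed.
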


\begin{proof}[Proof of part (\ref{Cassini})]

We prove this part using Binet formula \eqref{bineformulauno} from Section \ref{GFP}.  Therefore, substituting formula \eqref{bineformulauno} in
$ \Ft{n+1}  \Ft{n-1} -( \Ft{n} )^{2}$, we obtain,
$$\left[(a^{n+1}-b^{n+1})(a^{n-1}-b^{n-1})-(a^{n}-b^{n})^{2}\right]/(a-b)^{2}.$$
Simplifying this expression, we obtain
$$\dfrac{2(ab)^{n}-(a^{n+1}b^{n-1}+b^{n+1}a^{n-1})}{(a-b)^{2}}.$$
Factoring and simplifying we see that the last
expression reduces to $-(ab)^{n-1}$, and this is equal to $(-1)^{n}g^{n-1}$.
\end{proof}

\begin{proof}[Proof of part (\ref{Catalan})]
Using Binet formula \eqref{bineformulauno} in
$\Ft{n}^{2}-(-g)^{n-m}\Ft{m}^{2}$, we have, $$\left[(a^{n}-b^{n})^{2}-(ab)^{n-m}(a^{m}-b^{m})^{2}\right]/(a-b)^{2}.$$
Simplifying the right hand side of the last expression we obtain, $$\left[a^{2n}+b^{2n}-a^{n+m}b^{n-m}-a^{n-m}b^{n+m}\right]/(a-b)^{2}.$$
Factoring the last expression, we get $\left[(a^{n+m}-b^{n+m})(a^{n-m}-b^{n-m})\right]/(a-b)^{2}$, and this is equal to $ \Ft{n+m}  \Ft{n-m} $.
\end{proof}

\begin{proposition}\label{identities:from:Fibo}

Let $\{  \Lt{n} \}$ and $\{  \Ft{n} \}$ be equivalent generalized Fibonacci polynomial sequences. If $m$ and $n$ are positive integers, then

\begin{enumerate}[(1)]
\item  \label{I1} $(a-b)^2 \Ft{n}  =\alpha( \Lt{n+1} +g \Lt{n-1} )$

\item  \label{I2} $g \Ft{n-1} + \Ft{n+1} =\alpha  \Lt{n} $

\item  \label{I3} $ \Ft{n+2} -g^2 \Ft{n-2} =\alpha^2 \Lt{1}  \Lt{n} $

\item  \label{I4} $ \Ft{n+2} +g^{2} \Ft{n-2} =(d^2+2g) \Ft{n} $

\item \label{I5} $ \Ft{n+2} +g^2 \Ft{n-2} =\alpha  \Lt{2}  \Ft{n} $

\item  \label{I6}   $\alpha  \Lt{n}  \Ft{n} = \Ft{2n} $

\item  \label{I7}   $\alpha ( \Ft{n+1}  \Lt{n+1} - \Ft{n}  \Lt{n} )= \Ft{2n+2} - \Ft{2n} $

\item  \label{I8}   $\alpha( \Lt{m}  \Ft{n} + \Lt{n}  \Ft{m} )= 2 \Ft{n+m} $

\item  \label{I9}   if $n\ge m$, then $\alpha( \Lt{m}  \Ft{n} - \Lt{n}  \Ft{m} )= 2(-g)^m  \Ft{n-m} $

\item  \label{I10}   if $\alpha=1$, then $2  \Lt{2n} -\Lt{n}^2=(a-b)^2\Ft{n}^2$

\item  \label{I11}
$   \Lt{2n} -2(-g)^n= \left(((a-b) \Ft{n} )^2-2(\alpha-1)(-g)^n\right)/\alpha $

  \item  \label{I12}   if $\alpha=1$, then $(a-b)^2\Ft{n}^2- \Lt{n}^2=4(-1)^{n+1}g^n$

  \item  \label{I13}

   $\alpha^2\left(g { \Lt{n}}^{2}+ \Lt{n+1}^{2}\right)=(a-b)^2\left(g \Ft{n}^{2}+\Ft{n+1}^{2}\right)=(a-b)^{2} \Ft{2n+1} $

   \item  \label{I14}   $ \Lt{2}  \Ft{n} +\alpha  \Lt{1}  \Lt{n} =2 \Ft{n+2} /\alpha$

 \item  \label{I15}   $ (a-b)^2  \Lt{1}  \Ft{n} +\alpha  \Lt{2}  \Lt{n} = 2 \Lt{n+2} $

 \item  \label{I16}   $\alpha  \Ft{n+1}  \Lt{n} = \Ft{2n+1} +(-g)^{n}$

 \item  \label{I17}   if $n\ne m$, then
 $ \Lt{m}  \Ft{n-m+1} +g \Lt{m-1}  \Ft{n-m} = \Lt{n} $

 \item  \label{I18}   if  $n\ge m$, then
 $\alpha^2  \Lt{m+n}^{2}+g^{2n}(a-b)^2\Ft{m-n}^{2}=\alpha^2  \Lt{2n}  \Lt{2m} $

\item  \label{I19}   if $m\ge n$, then
 $\alpha^2 g^{2n} \Lt{m-n} ^{2}+(a-b)^2 \Ft{m+n}^{2}=\alpha^2  \Lt{2n}  \Lt{2m} $

 \item  \label{I20}   if  $m\ge n$, then
 $\alpha^{2}\left(\Lt{2m+2n}^{2}-g^{4n}\Lt{2m-2n}^{2}\right)=(a-b)^2  \Ft{4m}  \Ft{4n} $

\item  \label{I21}   $(a-b)^2 \Ft{2n}^{2}+2g^{2n}=\alpha \Lt{4n} $

\item  \label{I22}   $(a-b)^2\Ft{2n+1}^{2}-2(g)^{2n+1}=\alpha{ \Lt{4n+2}} $

\item  \label{I23}    $\alpha{ \Lt{n}} { \Ft{2n}} -(-g)^{n}{ \Ft{n}} ={ \Ft{3n}} $

\item  \label{I24}   ${ \Ft{n}} \left(\alpha{ \Lt{2n}} +(-g)^{n}\right)={ \Ft{3n}} $

\item  \label{I25}   ${ \Ft{4n+1}} -g^{2n}=\alpha { \Lt{2n+1}} { \Ft{2n}} $

\item  \label{I26}   ${ \Ft{4n+3}} +(-g)^{2n+1}=\alpha { \Lt{2n+1}} { \Ft{2n+2}} $

\item  \label{I27}   $\alpha { \Lt{n+1}} -(a-b)^{2}{ \Ft{n}} =\alpha (-g){ \Lt{n-1}} $

\item  \label{I28}   $\alpha\left({ \Ft{n+1}} { \Lt{n+2}} -d{ \Ft{n+2}} { \Lt{n}} \right)= g{ \Ft{2n+1}} -(-g)^{n}(d^2-g)$

\item  \label{I29}  if  $n\ge m$, then
$(a-b)^{2}\left(\Ft{n+m}^{2}+g^{2m}\Ft{n-m}^{2}\right)  =\alpha^{2}{ \Lt{2n}} { \Lt{2m}} -4(-g)^{n+m}$

\item  \label{I30}   if $n\ge m$, then
\[(a-b)^{2}\left({ \Ft{n+m}} { \Ft{n+m+1}} +g^{2m}{ \Ft{n-m}} { \Ft{n-m+1}} \right) =\alpha^{2}{ \Lt{2n+1}} { \Lt{2m}} -2(-g)^{n+m}d\]

\item  \label{I31}   ${ \Ft{n+4}} \Ft{n+1}^{2}-{ \Ft{n}} \Ft{n+3}^{2}=\alpha d(-g)^{n}{ \Lt{n+2}} $

 \item  \label{I32}   $\alpha\left({ \Ft{n+4}} \Lt{n+1}^{2}-{ \Ft{n}} \Lt{n+3}^{2}\right)= d^{3}(-g)^{n}{ \Lt{n+2}} $

 \item  \label{I33}   $2\alpha  \Lt{m+n} =\alpha^2{ \Lt{m}} { \Lt{n}} +(a-b)^2{ \Ft{m}} { \Ft{n}} $

 \item  \label{I34}   $\alpha( \Lt{m+n} -(-g)^{n}{ \Lt{m-n}} )=(a-b)^2{ \Ft{m}} { \Ft{n}} $

 \item  \label{I35}   $\alpha^{2}{ \Lt{2m+1}} { \Lt{2n+1}} =\alpha^2 \Lt{m+n+1}^{2}-(a-b)^2(g)^{2n+1}\Ft{m-n}^{2}$

 \item  \label{I36}   if $n\ge m$, then
 \[\alpha (a-b) { \Ft{n}} { \Lt{n+m}} -\alpha^{2}{ \Lt{n}} { \Lt{n-m}}
 \]
 equals
 \[ (a-b)({ \Ft{2n+m}} -(-g)^{n} { \Ft{m}} )-\alpha\left( \Lt{2n-m} -(-g)^{n-m}{ \Lt{n}} \right)\]

 \item  \label{I37}   $\alpha^{2}{ \Lt{n-1}} { \Lt{n+1}} -(a-b)^2({ \Ft{n}} )^{2}=(-g)^{n-1}\left(\alpha{ \Lt{2}} -2g\right)$

 \item  \label{I38}   $(a-b)^2{ \Ft{2n+3}} { \Ft{2n-3}} =\alpha\left( \Lt{4n} -(-g)^{2n-3}{ \Lt{6}} \right)$

 \item  \label{I39}   ${ \Lt{5n}} ={ \Lt{n}} \left[(\alpha{ \Lt{2n}} -(-g)^{n})^{2}+(a-b)^{2}(-g)^{n} \Ft{n}^{2}\right]$

 \item  \label{I40}   $ \Ft{n+5} -g^{2} \Ft{n+1} =d\alpha  \Lt{n+3} $

 \item  \label{I41}   $ \Ft{n+5} +g^{2} \Ft{n+1} =\alpha  \Lt{2}   \Ft{n+3} $

 \item  \label{I42}   $d\Ft{n}^{2}+2g{ \Ft{n-1}} { \Ft{n+1}} ={ \Ft{2n}} $

 \item  \label{I43}   $\Ft{n+1}^{2}-g^{2}\Ft{n-1}^{2}=d \Ft{2n} $

 \item  \label{I44}   $\Ft{n+3}^{2}+g^{3}\Ft{n}^{2}= \Ft{2n+3}  \Ft{3} $

 \item  \label{I45}   if  $n\ge m$, then
 $ \Ft{n+m+1}^{2}+g^{2m+1}\Ft{n-m}^{2}= \Ft{2n+1}  \Ft{2m+1} $

 \item  \label{I46}   if  $n\ge m$, then
 $ \Ft{n+m}^{2}-g^{2m}\Ft{n-m}^{2}= \Ft{2n}  \Ft{2m} $

 \item  \label{I47}   if  $n\ge m$, then
 $ \Ft{n}  \Ft{m+1} - \Ft{m}  \Ft{n+1} =(-g)^{m} \Ft{n-m} $

 \item  \label{I48}   if  $n\ge m$, then
 $ \Ft{n+1}  \Ft{m+1} -g^{2} \Ft{n-1}  \Ft{m-1} =d \Ft{n+m} $

 \item  \label{I49}   if  $n\ge m$, then
 $ \Ft{n+1}  \Ft{m} +g  \Ft{n}  \Ft{m-1} = \Ft{n+m} $

 \item  \label{I50}   if $n\ge m$, then
 $ \Ft{n-m+1}  \Ft{m} +g  \Ft{n-m}  \Ft{m-1} = \Ft{n} $

 \item  \label{I51}   $ \Ft{n}  \Ft{n+1} - \Ft{n-1}  \Ft{n+2} =d(-g)^{n-1}$

 \item  \label{I52}  if $i\ge 0$, then
 $ \Ft{n+i}  \Ft{n+m} -  \Ft{n}  \Ft{n+m+i} =(-g)^{n} \Ft{i}  \Ft{m} $

 \item  \label{I53}   $\sqrt{(a-b)^{2}\Ft{n}^{2}+4(-g)^{n}}-g \Ft{n-1} = \Ft{n+1} $

 \item  \label{I54}   if  $r\le \min\{ m, n, s, t\}$ and $m+n=s+t$, then
 $$ \Ft{m}  \Ft{n} -  \Ft{s}  \Ft{t} =(-g)^{r}\left( \Ft{m-r}  \Ft{n-r} - \Ft{s-r}  \Ft{t-r} \right)$$

 \item  \label{I55}    $ \Ft{n+2}  \Ft{m+1} - g^{2} \Ft{n}  \Ft{m-1} =d \Ft{n+m+1} $

 \item  \label{I56}   $ \Lt{2n} \left[\alpha^{2}( \Lt{4n} -g^{2n}/\alpha)^{2}+g^{2n}(a-b)^{2}\Ft{2n}^{2}\right]= \Lt{10n} $

 \item  \label{I57}   $ \Lt{2n} +2(-g)^{n}/\alpha=\alpha \Lt{n}^{2}$

 \item  \label{I58}  if  $n\ge m$, then
 $ \Lt{n+m} +(-g)^{m} \Lt{n-m} =\alpha  \Lt{m}  \Lt{n} $

 \item  \label{I59}   $(a-b)\sqrt{\Lt{n}^{2}-4(-g)^{n}/\alpha^{2}}-g \Lt{n-1} =  \Lt{n+1} $

 \item  \label{I60}   $(a-b)^{2}\Ft{n+2}^{2}+g^{2}\alpha^{2}\Lt{n}^{2}=\alpha^{2}  \Lt{2}  \Lt{2n+2} $

 \item  \label{I61}   $\alpha^{2}\left(\Lt{n+1}^{2}-g^{2} \Lt{n-1} ^{2}\right)=(a-b)^{2} \Ft{2}  \Ft{2n} $

 \item  \label{I62}   $\alpha^{2}\Lt{n+1}^{2}-(a-b)^{2}g \Ft{n}^{2}=\alpha^{2}d \Lt{2n+1} $

 \item  \label{I63}   if  $n\ge m$, then $ \Lt{n+m} +(-g)^{m} \Lt{n-m} =\alpha  \Lt{n}  \Lt{m} $

 \item  \label{I64}   $ \Ft{n+1}  \Lt{n+1} +g \Ft{n}  \Lt{n} = \Lt{2n+1} $

 \item  \label{I65}   if  $m\ge n$, then
 $ \Ft{m}  \Ft{2m}  \Ft{3n} =\Ft{m+n}^{3}-(-g)^{3n}\Ft{m-n}^{3}-\alpha (-g)^{m}\Ft{n}^{3} \Lt{m} $

 \item  \label{I66}   $(a-b)^{2}\left[ \Ft{n}^{2}+\Ft{n+1}^{2}\right]=\alpha^{2}\left[\Lt{n}^{2}+\Lt{n+1}^{2}\right]+4(-g)^{n}(g-1)$

 \item  \label{I67}   $ \Ft{m}  \Lt{n} +g \Ft{m-1}  \Lt{n-1} = \Lt{m+n-1} $

 \item  \label{I68}   $ \Ft{n+m+i}  \Lt{n} - \Ft{n+m}  \Lt{n+i} =(-g)^{n} \Lt{m}  \Ft{i} $

 \item  \label{I69}   $ \Ft{n+i}  \Lt{n+m} - \Ft{n}  \Lt{n+m+i} =(-g)^{n} \Lt{m}  \Ft{i} $

 \item  \label{I70}   $\alpha^{2}( \Lt{n+m+i}  \Lt{n} - \Lt{n+m}  \Lt{n+i} )=(-g)^{n}(a-b)^{2} \Ft{m}  \Ft{i} $

 \item  \label{I71}   $(-g)^{k} \Ft{n}  \Ft{m-k} +(-g)^{m} \Ft{n-m}  \Ft{k} +(-g)^{n} \Ft{m}  \Ft{k-n} =0$

 \item  \label{I72}   $(-g)^{k} \Lt{n}  \Ft{m-k} +(-g)^{m} \Lt{k}  \Ft{n-m} +(-g)^{n} \Lt{m}  \Ft{k-n} =0$

 \item  \label{I73}   $(a-b)^{2} \Ft{jk+r}  \Ft{ju+v} =\alpha\left[ \Lt{j(k+u)+r+v} -(-g)^{ju+v} \Lt{j(k-u)+r-v} \right]$

 \item  \label{I74}   $\alpha  \Ft{jk+r}  \Lt{ju+v} =\left[ \Ft{j(k+u)+r+v} +(-g)^{ju+v} \Ft{j(k-u)+r-v} \right]$

 \item  \label{I75}   $\alpha  \Lt{jk+r}  \Lt{ju+v} =\left[ \Lt{j(k+u)+r+v} +(-g)^{ju+v} \Lt{j(k-u)+r-v} \right]$

 \item  \label{I76}   if $m+n=s+t$, then $\Ft{m}  \Lt{n} - \Ft{s}  \Lt{t} =(-g)^{r}\left[ \Ft{m-r}  \Lt{n-r} - \Ft{s-r}  \Lt{t-r} \right]$

 \item  \label{I77}   if $m+n=s+t$, then

 \[(a-b)^{2} \Ft{m}  \Ft{n} -\alpha^{2} \Ft{s}  \Lt{t} =(-g)^{r}\left[(a-b)^{2} \Ft{m-r}  \Ft{n-r} -\alpha^{2} \Lt{s-r}  \Lt{t-r} \right]\]

 \item \label{I78} $\Ft{n+1}^{3}+gd \Ft{n}^{3}-g^{3}\Ft{n-1}^{3}=d \Ft{3n} $

 \item \label{I79} $\alpha^{2}\left[ \Lt{n+1}^{3}+gd \Lt{n}^{3}-g^{3}\Lt{n-1}^{3}\right]=d(a-b)^{2} \Lt{3n} $

\item \label{I80} $\alpha^{2}\left[ \Ft{n+1} \Lt{n+1}^{2}+gd  \Ft{n} \Lt{n}^{2}-g^{3} \Ft{n-1} \Lt{n-1}^{2}\right]=d(a-b)^{2} \Ft{3n} $

\item \label{I81} $ \Lt{n+1}  \Ft{n+1}^{2}+gd  \Lt{n} \Ft{n}^{2}-g^{3} \Lt{n-1} \Ft{n-1}^{2}=d \Lt{3n} $

\item \label{I82} $ \Ft{3n} =\alpha  \Lt{3}  \Ft{3n-3} +g^{3} \Ft{3n-6} $

\item \label{I83} $ \Ft{3n} =(a-b)^{2}\Ft{n}^{3}+3(-g)^{n} \Ft{n} $

\item \label{I84} $ \Ft{r+s+t} = \Ft{r+1}  \Ft{s+1}  \Ft{t+1} +gd \Ft{r}  \Ft{s}  \Ft{t} -g^{3} \Ft{r-1}  \Ft{s-1}  \Ft{t-1} $

\item \label{I85} if $k< s <m$ are non-negative integers, then
$$ \Ft{m+k}  \Ft{m-k} - \Ft{m+s}  \Ft{m-s} =(-g)^{m-s} \Ft{s-k}  \Ft{s+k}$$

\item \label{I86} if $r<n$ is a positive integer, then
$ \Ft{r}  \Ft{m+n} = \Ft{m+r}  \Ft{n} -(-g)^{r} \Ft{n-r}  \Ft{m} $

\item \label{I87} if $r<m$ is a positive integer, then
$$d \Ft{2r}\left[(a-b)^2 \Ft{m}^{2}+2(-g)^{m}\right]$$
equals
$$\Ft{m+r+1}^{2}-g^{2}\Ft{m+r-1}^{2}-g^{2r}\Ft{m-r+1}^{2}+g^{2r+2}\Ft{m-r-1}^{2}$$

\item \label{I88} if $r<m$ is a positive integer, then
$$\alpha^{2} \Lt{1}  \Lt{2m}  \Ft{2r} =
\Ft{m+r+1}^{2}-g^{2}\Ft{m+r-1}^{2}-g^{2r}\Ft{m-r+1}^{2}+g^{2r+2}\Ft{m-r-1}^{2}$$

\item \label{I89}  $\left(d^2 \Ft{n+1}^{2}+\Ft{n+2}^{2}\right)^{2}=
\Ft{n}^{2}\left( \Ft{n+4} -d^{2} \Ft{n+2} \right)^{2}+(2d \Ft{n+1}  \Ft{n+2} )^{2}$

(This is an adaptation of the Pythagorean Theorem for the Fibonacci type polynomials)

\item \label{I90}
$\left(d^2 \Lt{n+1}^{2}+\Lt{n+2}^{2}\right)^{2}=\Lt{n}^{2}\left( \Lt{n+4} -d^{2} \Lt{n+2} \right)^{2}+(2d \Lt{n+1}  \Lt{n+2} )^{2}$

(This is an adaptation of the Pythagorean Theorem for the Lucas type polynomials)

\item \label{I91}
$\Ft{n+2m+1}^{2}+g^{2m+1}\Ft{n}^{2}= \Ft{2m+1}  \Ft{2n+2m+1} $

\item \label{I92}
$\alpha^{2}\left(\Ft{n+m}^{2} \Lt{n+m}^{2}-g^{2n}\Ft{m}^{2}\Lt{m}^{2}\right)= \Ft{2n}  \Ft{4m+2n} $

\item \label{I93} if $ r<m$ is a positive integer, then
$$\alpha^{2} \Lt{2m-2}  \Lt{2r} -2(-g)^{m+r-2}(2g+d^2)$$
equals
$$(a-b)^{2}\left( \Ft{m+r}  \Ft{m+r-2} +g^{2r} \Ft{m-r}  \Ft{m-r-2} \right)$$

\item \label{I94}
$\displaystyle{\left[\left(\alpha  \Lt{n} +(a-b) \Ft{n}\right)/2\right]^{m}=\left(\alpha  \Lt{mn} +(a-b) \Ft{mn} \right)/2}$

 \item \label{I95} $ \Ft{3^{n+1}}  = (a-b)^2 \Ft{3^n}^3 + 3(-g)^{3^n}\Ft{3^n}$

\item \label{I96} $ \Ft{5^{n+1}}  = (a-b)^4 \Ft{5^n}^5 + 5(a-b)^2(-g)^{5^n}\Ft{5^n}^3 + 5(-g)^{2(5^n)}\Ft{5^n}$

\item \label{I97}
$ \Ft{7^{n+1}} = A^6 \Ft{7^n} ^7 + 7A^4(-g)^{7^n} \Ft{7^n} ^5 + 14A^2(-g)^{2(7^n)}\Ft{7^n}^3 + 7(-g)^{3(7^n)} \Ft{7^n}$, where  $A=(a-b)$.

\item \label{I98} $ \Lt{2^{n+1}}  = \alpha \Lt{2^n}^2 - 2\alpha^{-1}(-g)^{2^n}$

\item \label{I99} $ \Lt{4^{n+1}}  = \alpha^3 \Lt{4^n}^4 - 4\alpha(-g)^{4^n}\Lt{4^n}^2 + 2\alpha^{-1}(-g)^{2(4^n)}$

\item \label{I100} $ \Lt{6^{n+1}}  = \alpha^5\Lt{6^n}^6 - 6\alpha^{3}(-g)^{6^n} \Lt{6^n}^4 + 9\alpha(-g)^{2(6^n)}\Lt{6^n}^2-2\alpha^{-1}(-g)^{3(6^n)}$.

\end{enumerate}
\end{proposition}

\begin{proof}[Proof of part (\ref{I1})]
	We know $ \Ft{n} $ is a Fibonacci type polynomial, so we can represent it using Binet formula \eqref{bineformulauno}. Therefore, $(a-b)^2 \Ft{n} $ can be written as $(a-b)^2[(a^n-b^n)/(a-b)]$. Simplifying, we see the above expression equals $(a^{n+1}+b^{n+1}-ab^n-ba^n)$.  This reduces to $a^{n+1}+b^{n+1}+g(a^{n-1}+b^{n-1})$, since we know that $ab=-g$.  It is easy to see that
    $$a^{n+1}+b^{n+1}+g(a^{n-1}+b^{n-1})=\alpha\left(\frac{a^{n+1}+b^{n+1}}{\alpha} + g\frac{a^{n-1}+b^{n-1}}{\alpha}\right).$$
 This and Binet formula \eqref{bineformulados} give us $\alpha( \Lt{n+1} +g \Lt{n-1} )$ and the proof is complete.
\end{proof}

\begin{proof}[Proof of part (\ref{I2})]

	Similar to part (1), we know $ \Ft{n} $ is a Fibonacci type polynomial, so from Binet formula \eqref{bineformulauno} and $g=-ab$, we have
    $$g \Ft{n-1} + \Ft{n+1} =\left(-ab(a^{n-1}-b^{n-1})+a^{n+1}-b^{n+1}\right)/(a-b).$$
    Simplifying the numerator we have $\left(a^n(a-b)+b^n(a-b)\right)/(a-b)$. This and Binet formula  \eqref{bineformulados} imply that $a^n+b^n=\alpha  \Lt{n} $.
\end{proof}

\begin{proof}[Proof of part (\ref{I3})]

Substituting  $ \Ft{n} $ with its Binet formula \eqref{bineformulauno} and $-ab$ with $g$ we obtain  $$ \Ft{n+2} -g^2 \Ft{n-2} =\left(a^{n+2}-b^{n+2}-a^2b^2(a^{n-2}-b^{n+2})\right)/(a-b).$$
 Factoring the numerator we have $\left((a^2-b^2)(a^n+b^n)\right)/(a-b)$. This implies that, $$ \Ft{n+2} -g^2 \Ft{n-2} =(a+b)(a^n+b^n).$$
 Rewriting the right hand side we obtain
    $$ \Ft{n+2} -g^2 \Ft{n-2} =\alpha^2\left(\frac{a+b}{\alpha}\right)\left(\frac{a^n+b^n}{\alpha}\right).$$
    This and Binet formula \eqref{bineformulados} for Lucas type polynomials complete the proof.
\end{proof}

\begin{proof}[Proof of part (\ref{I4})]
We use the Binet formula \eqref{bineformulauno}, $d=(a+b)$, and $g=-ab$ in the expression $[(d^{2}+2g) \Ft{n} ]$, to obtain $[(a^{2}+b^{2})(a^{n}-b^{n})]/(a-b)$. Expanding and simplifying we get $[(a^{n+2}-b^{n+2})+(ab)^{2}(a^{n-2}-b^{n-2})]/(a-b)$ which is the same as $[ \Ft{n+2} +g^{2} \Ft{n-2} ]$.
\end{proof}

\section{Acknowledgement}

The first and the last authors were partially supported by The Citadel Foundation.

\bigskip
\hrule
\bigskip

\noindent  MSC 2010:
Primary 11B39; Secondary 11B83.

\noindent \emph{Keywords: }
Fibonacci polynomials, generalized Fibonacci polynomial, Chebyshev polynomials,
Morgan-Voyce polynomials, Lucas polynomials, Pell polynomials, Fermat polynomials.

\end{document}